\documentclass{amsart}

\usepackage{amsmath,
tikz,circuitikz,graphicx,
enumerate,
xcolor, 
amssymb,
mathtools,
float,
soul, 
booktabs, 
comment,
appendix,
lmodern 
}
\newtheorem{theorem}{Theorem}[section]
\newtheorem{lem}[theorem]{Lemma}

\theoremstyle{definition}
\newtheorem{defn}[theorem]{Definition}
\newtheorem{example}[theorem]{Example}
\newtheorem{prop}[theorem]{Proposition}
\newtheorem{nota}[theorem]{Notation}

\theoremstyle{remark}
\newtheorem{rem}[theorem]{Remark}

\numberwithin{equation}{section}

\newcommand{\C}{\mathbb{C}}
\newcommand{\B}{\mathbb{B}}

\newcommand{\I}{\mathbb{I}}

\DeclareMathOperator{\End}{End}

\DeclareMathOperator{\id}{id}

\newcommand{\pv}{\textcolor{blue}}

\begin{document}

\title[Gaussian solutions and their twists]{Gaussian solutions to the Yang--Baxter equation and their twists}

\author{Yasmeen S. Baki}
\address{Arizona State University}
\email{ybaki1@asu.edu}

\author{Padmini Veerapen}
\address{Indiana University Northwest}
\email{pveerap@iu.edu}

\subjclass[2020]{Primary 16T10, 16T25; Secondary 16S37, 16W20}
\date{July 1, 2026 and, in revised form, June 22, 1994.}

\keywords{bialgebras, quadratic algebras, Yang--Baxter equations, automorphisms}

\begin{abstract}
In this paper, we consider two explicit Gaussian solutions to the constant (or  parameter-independent) quantum Yang--Baxter equation and produce the corresponding bialgebras using the Faddeev--Reshetikhin--Takhtajan construction (FRT). Additionally, we twist these two Gaussian solutions, via Zhang twists and corresponding 2-cocycle twists, to obtain solutions to the Yang--Baxter equation which are not necessarily Gaussian.
\end{abstract}

\maketitle

\section{Introduction}
The Yang--Baxter (Y--B) equation is famous for its far-reaching applications in areas as diverse as representations of the braid group, invariant theory of knots and links, and quantum groups. Indeed, according to Jimbo \cite{Jimbo_Introduction}, work on the Y--B equation probably begun as early as 1944 in Onsager's paper on the Ising model before appearing in McGuire's in 1964 and Yang's in 1967 \cite{Onsager1944,McGuire1964,Yang1967}, and leading to Baxter's solution in 1972 in terms of a spectral parameter (known as parameter-dependent or parametric solution) \cite{Baxter1972,Baxter1982}. In the 1980s, the work on the Y--B equation gave rise to beautiful mathematics in the realms of representations of the braid group and quantum groups (see e.g., Figure \ref{table1} for a short summary).

\begin{figure}[!ht]
\begin{center}
\resizebox{.5\textwidth}{!}
{%
\begin{circuitikz}
\tikzstyle{every node}=[font=\fontsize{18.2pt}{23.7pt}\selectfont]
\draw  (-84.5,18.75) rectangle (-75.25,13.25);
\node[anchor=center, align=center, fill={rgb,255:red,255; green,255; blue,255}, fill opacity=1, text opacity=1, inner xsep=0.080cm, inner ysep=0.085cm, rounded corners=0.020cm] at (-79.875,16) {\fontsize{18.2pt}{23.7pt}\selectfont \textbf{Solution to} \\ \textbf{parameter-dependent} \\ \textbf{Yang-Baxter equation} \\ Baxter (1972) \\ \cite{Baxter1972,Baxter1982}};
\draw  (-69.875,17) rectangle (-58.625,11);
\node[anchor=center, align=center, fill={rgb,255:red,255; green,255; blue,255}, fill opacity=1, text opacity=1, inner xsep=0.080cm, inner ysep=0.085cm, rounded corners=0.020cm] at (-64.25,14) {\fontsize{18.2pt}{23.7pt}\selectfont \textbf{Quantum Groups} \\ Faddeev, Sklyanin, \& Takhtajan, \\ Kulish, Reshetikhin, \\ Drinfeld, Jimbo \\ (1979-1989) \\\cite{FST1979,Faddeev,KS11982,KS21982,Sklyanin,drinfeld,Jimbo1985}};
\draw [dashed] (-75.125,21.5) -- (-79.625,18.75);
\draw [dashed] (-71.75,21.375) -- (-65.25,17);
\draw  (-78.75,28.75) rectangle (-67.875,21.5);
\node[anchor=center, align=center, fill={rgb,255:red,255; green,255; blue,255}, fill opacity=1, text opacity=1, inner xsep=0.080cm, inner ysep=0.085cm, rounded corners=0.020cm] at (-73.3125,25.125) {\fontsize{18.2pt}{23.7pt}\selectfont \textbf{Yang-Baxter equation} \\ \textbf{(beginnings)} \\ \\ Onsager (1944) \cite{Onsager1944}, \\ McGuire (1964) \cite{McGuire1964}, \\ Yang (1967) \cite{Yang1967}};
\draw  (-86.375,8.125) rectangle (-72.25,0.75);
\node[anchor=center, align=center, fill={rgb,255:red,255; green,255; blue,255}, fill opacity=1, text opacity=1, inner xsep=0.080cm, inner ysep=0.085cm, rounded corners=0.020cm] at (-79.3125,4.4375) {\fontsize{18.2pt}{23.7pt}\selectfont \textbf{Representations of braid groups \&} \\ \textbf{solutions to Yang--Baxter equation} \\  \\ Jones, Goldschmidt, Rowell \\ (1989-2014) \\\cite{JonesBaxter1991,GJ1989,GalindoRowell2014}};
\draw [dashed] (-80.125,13.125) -- (-80.125,8.125);
\draw  (-87.875,30.125) rectangle (-57.125,-1.125);
\end{circuitikz}
}%
\caption{}
\end{center}
\label{table1}
\end{figure}

Over the many decades that followed the appearance of the Y--B equation, several families of solutions to the Y--B equation have been determined, ranging from \textit{classical} solutions to \textit{Gaussian} solutions. Hietarinta \cite{HIETARINTA1992245} classified solutions $R$ to the constant (or parameter-independent) quantum Y--B equation in dimension $n = 2$, where $R \in \End_{\Bbbk}(M) \otimes \End_{\Bbbk}(M)$, and $M$ is a 2-dimensional vector space. However, in dimension $n \ge 3$, the classification is incomplete. Our focus in this paper will be on \textit{Gaussian} solutions to the constant (or parameter-independent) quantum Y--B equation. We are motivated, in part, by Martin and Rowell's work on these solutions and their corresponding representations of the braid group, $\B_n$ \cite{MR2023}. As it were, the association between solutions to the Y--B equation and Artin's presentation of the $n$-string braid group was made initially by Jones \cite{Jones1985}, and continued in the work of Jones and Goldschmidt in the 1980s, where they considered Gaussian representations of $\B_n$ in the case where $n$ is a positive odd integer \cite{GJ1989,Jonespoly1989,Jonesstats1989}. This was later extended to the case where $n$ is a positive even integer by Galindo and Rowell in \cite{GalindoRowell2014}.

We have two main goals in this paper. The first is to further examine quantum groups or bi/Hopf algebras from the perspective of Gaussian solutions by applying the work of Faddeev, Reshetikhin, and Takhtajan via the FRT construction \cite{FRT} to produce coquasitriangular bialgebras. The second goal is to twist Gaussian solutions corresponding to the constant quantum Y--B equation when $n = 2$ via the mechanism of twisting pairs (i.e., pairs of algebra automorphisms of a bialgebra) defined in work by the second author \cite[Definition D]{HNUVVW1}.

With the widespread interest in quantum computing, the work of Kauffman and Lomonaco \cite{Kauffman_Lomonaco_2004} on unitary solutions to the Y--B equation is pertinent. Indeed, they show that unitary solutions to the Y--B equation are universal quantum gates. An example of such a solution is the Bell basis change matrix, $R_{+}$, given in \S \ref{sec:Gaussian}, Definition \ref{bellbasischangematrix_}. Rowell \cite{Kauffman_Lomonaco_2004,rowell2014} observes that $R_{+}$ is equivalent to a Gaussian solution in dimension $n = 2$, and that in higher dimensions the Gaussian solutions to the Y--B equation provide $n$-level generalizations of the Bell states.

In \S \ref{sec:Gaussian}, we produce an explicit list of Gaussian solutions, both unitary and non-unitary, in the case where $n = 2$, and in Proposition \ref{prop:equivclassslns} we partition the solutions into equivalence classes. In \S \ref{sec:FRTSect}, we use the FRT construction \cite{FRT} to compute the bialgebra $A(R)$, corresponding to the solution $R \in \End_{\Bbbk}(M) \otimes \End_{\Bbbk}(M)$, where $M$ is a 2-dimensional vector space. In Theorem \ref{thm:A(R1)_presentation}, we give its presentation and Appendix \ref{app:A1zijkl} lists the computations for the sixteen Y--B obstructions used to write $A(R)$ explicitly in terms of generators and relation. In \S \ref{sec:twists}, we examine the solutions to the Y--B equation obtained through twisting $R$ by a twisting pair, as described in \cite[\S 4.2]{HNUVVW1}. To do so, we classify all valid twisting pairs of $R$ in Proposition \ref{prop:all_pairs}, and compute all possible twists of $R$ in Theorem \ref{thm:twisted_gauss}. As a result, we conclude that a twist of a Gaussian solution does not necessarily remain Gaussian. This suggests that finding additional solutions to the Y--B equation in dimensions $n \ge 3$ may be achieved by twisting higher-dimensional Gaussian solutions.     
    
    \begin{nota}Throughout this paper, let the field $\Bbbk = \C$. All vector spaces and all bialgebras are over the field $\Bbbk$. Unadorned tensors are understood to be over the base field $\Bbbk$. Any solution to the Y--B equation, $R_{\omega}$, for $\omega \in \mathbb{C}$, is scaled by $1/{\sqrt{2}}$. We use the Sweedler notation for the coproduct in a bialgebra $B$. That is, for any $h \in B$, $\Delta(h) = \sum h_1 \otimes h_2 \in B \otimes B$. The Einstein summation convention is being used. That is, if an index occurs twice in an expression once as an upper index and once as a lower index, then the sum is taken such that the index runs from $0$ to $1$. 
    \end{nota} 

    \subsection*{Acknowledgments} The first author is grateful to Indiana University Northwest, who sponsored a visit during which part of this paper was completed. The first author also acknowledges her use of ChatGPT Edu with GPT-5, with access provided by Arizona State University. In particular, GPT-5 was used in the preliminary stages of this project to discuss ideas, and later to help write Mathematica code which was used in the verification process. The entirety of this written work, along with its results, analysis, and conclusions are due to the first and second authors.   
    
    \section{The Yang--Baxter Equation and constant Gaussian Solutions}
    \label{sec:Gaussian}

    \subsection*{Constant versus parametric forms}
    The Y--B equation has many forms. One of its forms is the constant (also known as parameter-independent) form, and the other is the parametric (also known as parameter-dependent) form. Within each of these cases, we may further consider solutions to the \textit{quantum} Y--B equation and the \textit{braid equation} (BE). Our work will focus on solutions to the constant quantum Y--B equation, which is equivalent to solving the constant braid equation (cf. \cite[Proposition 2.2.1]{LambeRadford1997}). By ``solution to the Y--B equation'', we mean a solution to the \textit{constant} Y--B equation.
    
    \begin{defn}(Solution to the constant quantum Y--B equation \cite{CMZ,LambeRadford1997})
        Let $M$ be a $\Bbbk$-vector space, $R \in \End_{\Bbbk}(M) \otimes \End_{\Bbbk}(M)$ a linear map, and $\tau: M \otimes M \to M \otimes M$ the tensor flip map which sends $m \otimes n$ to $n \otimes m$, for $m, n \in M$. We provide the following definitions for $\{R^{ij}\}_{1 \leq i < j \leq 3} \in \End_{\Bbbk}(M) \otimes \End_{\Bbbk}(M) \otimes \End_{\Bbbk}(M)$
        \begin{align*}
            R^{12} &= R \otimes \id_M, \\
            R^{23} &= \id_M \otimes R, \\
            R^{13} &= (\id_M \otimes \tau)(R \otimes \id_M)(\id_M \otimes \tau).
        \end{align*}
        We say that $R$ is a solution to the constant quantum Y--B equation if the following holds
        \begin{align}
        \label{eq:cQYBE}
            R^{12}R^{13}R^{23} = R^{23}R^{13}R^{12}.
        \end{align}
    \end{defn}

    \begin{defn}\cite[Definition 2.3.1]{LambeRadford1997}
    \label{def:equiv_sol}
        Let $M$ and $M'$ be two $k$-vector spaces and suppose $R \in  \End_k(M) \otimes \End_k(M)$ and $R' \in \End_k(M') \otimes \End_k(M')$ are two solutions to the Y--B equation. We consider $R$ and $R'$ to be equivalent if there exists an isomorphism $u: M \to M'$ such that 
        \begin{align*}
            R' = (u \otimes u) R (u^{-1} \otimes u^{-1}).
        \end{align*}
    \end{defn}

    \subsection*{Gaussian Solutions}



The Bell basis change matrix has two forms $R_{-}$ and $R_{+}$. We provide both forms for completeness.
\begin{defn}\cite{Zhang_Jing_Ge_2008}
\begin{align}
\label{bellbasischangematrix_}
R_{-} = \frac{1}{\sqrt{2}}
\begin{pmatrix}
1 & 0 & 0 & 1 \\
0 & 1 & -1 & 0\\
0 & 1 & 1 &  0 \\
-1 & 0 & 0 & 1
\end{pmatrix}, 
&&  
R_{+} =
\frac{1}{\sqrt{2}}
\begin{pmatrix}
1 & 0 & 0 & 1 \\
0 & 1 & 1 & 0\\
0 & -1 & 1 &  0 \\
-1 & 0 & 0 & 1
\end{pmatrix}
\end{align}
\end{defn}

Kauffman and Lomonaco proved that $R_{+}$ satisfy the Y--B equation in \cite{Kauffman_Lomonaco_2004}. Additionally, Rowell notes that the Gaussian solutions to the Y--B equation are a family of transformations which generalize to higher-level Bell states, and which satisfy the usual topological braiding conditions \cite{rowell2014}. In particular, Gaussian solutions to the Y--B equation are given by 
\begin{align}
\label{eqtn:r_mat}
        R_{\omega} = \frac{1}{\sqrt{m}} \sum_{j=0}^{m-1} \omega^{j^2}U^j,
    \end{align}
    where $\omega$ is an $m^{th}$ root of unity when $m$ is odd, a $2m^{th}$ root of unity when $m$ is even, and $U \in \text{GL}(\mathbb{C}^{m^2})$ is such that $U(e_i \otimes e_j) = \omega^{i-j} e_{i-1} \otimes e_{j-1}$ where $\{e_i\}_{0 \leq i \leq m-1}$ is the standard basis for $\C^m$ (cf. \cite[\S 1]{rowell2014}).

    Note that in the case where $m = 2$ and $\omega = -i$, $R_{-}$ is equivalent to $R_{-i}$ in the sense of Definition \ref{def:equiv_sol} by taking 
    \begin{align*}
        u = 
        \begin{pmatrix}
            1 & 0 \\
            0 & e^{3 \pi i /4}
        \end{pmatrix}.
    \end{align*}
    
    In the following examples, we explicitly write out Gaussian solutions using Equation \ref{eqtn:r_mat} when $m=2$. Since it is necessary for $\omega$ to be a fourth root of unity, we have four options, yielding four $4 \times 4$ matrices. Let $\omega_1 = 1, \omega_2 = i, \omega_3 = -1$, and $\omega_4 = -i$, $\{e_0,e_1\}$ be the standard basis for $\mathbb{C}^2$, and 
    $\{e_{00},e_{01},e_{10}, e_{11}\}$ be an ordered basis for $\C^2 \otimes \C^2$. Note that when $j = 0$, $U^j = \I_4$, so $\omega_i^{j^2}U^j = \omega_i^0 U^0 = \I_4$ for all $1 \leq i \leq 4$.\\
    \begin{example}
    \label{egR1}
    We compute the Gaussian solution, denoted by $R_1$, corresponding to $\omega_1 = 1$ and obtain,
    \begin{align}
    \label{sln1}
        R_1 &= 
        \frac{1}{\sqrt{2}}\left(
        \I_4 + 
        \begin{pmatrix}
        0 & 0 & 0 & 1 \\
        0 & 0 & 1 & 0 \\
        0 & 1 & 0 & 0 \\
        1 & 0 & 0 & 0
        \end{pmatrix}
        \right) =
        \frac{1}{\sqrt{2}}
        \begin{pmatrix}
            1 & 0 & 0 & 1 \\
            0 & 1 & 1 & 0 \\
            0 & 1 & 1 & 0 \\
            1 & 0 & 0 & 1
        \end{pmatrix}.
    \end{align}
    \end{example}
    
    \begin{example}
    The Gaussian solution, $R_{-1}$, corresponding to $\omega_3 = -1$ is as follows.
    \begin{align}
        R_{-1} &= \frac{1}{\sqrt{2}}\left(
        \I_4 + 
        \begin{pmatrix}
            0 & 0 & 0 & -1 \\
            0 & 0 & 1 & 0 \\
            0 & 1 & 0 & 0 \\
            -1 & 0 & 0 & 0
        \end{pmatrix} 
        \right) = \frac{1}{\sqrt{2}}
        \begin{pmatrix}
            1 & 0 & 0 & -1 \\
            0 & 1 & 1 & 0 \\
            0 & 1 & 1 & 0 \\
            -1 & 0 & 0 & 1
        \end{pmatrix}.
    \end{align}
    \end{example}

    \begin{example}
    The Gaussian solution, $R_{i}$, corresponding to $\omega_2 = i$, is as follows.
    \begin{align}
    \label{eqn:Ri}
        R_{i} = \frac{1}{\sqrt{2}}\left(
        \I_4 +
        \begin{pmatrix}
            0 & 0 & 0 & i \\
            0 & 0 & -1 & 0 \\
            0 & 1 & 0 & 0 \\
            i & 0 & 0 & 0 
        \end{pmatrix}
        \right) = 
        \frac{1}{\sqrt{2}}
        \begin{pmatrix}
        1 & 0 & 0 & i \\
        0 & 1 & -1 & 0 \\
        0 & 1 & 1 & 0 \\
        i & 0 & 0 & 1
        \end{pmatrix}.
    \end{align}
    \end{example}

    \begin{example}
    The Gaussian solution, $R_{-i}$, corresponding to $\omega_4 = -i$, is as follows. 
    \begin{align}
    \label{R-i}
        R_{-i} &= \frac{1}{\sqrt{2}}\left(
        \I_4 + 
        \begin{pmatrix}
        0 & 0 & 0 & -i \\
        0 & 0 & -1 & 0 \\
        0 & 1 & 0 & 0 \\
        -i & 0 & 0 & 0 
        \end{pmatrix}
        \right) = \frac{1}{\sqrt{2}}
        \begin{pmatrix}
            1 & 0 & 0 & -i \\
            0 & 1 & -1 & 0 \\
            0 & 1 & 1 & 0 \\
            -i & 0 & 0 & 1
        \end{pmatrix}.
    \end{align}
    \end{example}

    \begin{rem}
    The choices for roots of unity $\omega = 1, -1, i, -i$ result in solutions $R_1, R_{-1}, R_{i},$ and $R_{-i}$, as shown above. We observe that solutions $R_1$ and $R_{-1}$ yield non-unitary solutions while, $R_{i},$ and $R_{-i}$ yield unitary solutions. To go from a solution of the Y--B equation to a representation of the braid group, $\B_n$, we require $R$ to be invertible, with many applications to quantum information theory requiring the solutions to be unitary. Hence, while $R_1$ and $R_{-1}$ yield Gaussian solutions, they do not possess the same applications as $R_i$ and $R_{-i}$.  
    \end{rem}

    \begin{prop}
    \label{prop:equivclassslns}
        Let $R_{1},R_{-1},{R_{i}},$ and $R_{-i}$ be defined as above. Then $[R_1, R_{-1}]$ and $[R_i,R_{-i}]$ are distinct equivalence classes of solutions to the Y--B equation in the sense of Definition \ref{def:equiv_sol}.
    \end{prop}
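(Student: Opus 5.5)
The plan is to verify two explicit equivalences and then separate the two classes with a conjugation invariant. Since the relation in Definition \ref{def:equiv_sol} is conjugation by $u\otimes u$, it is reflexive, symmetric and transitive. It therefore suffices to show three things: $R_1\sim R_{-1}$, $R_i\sim R_{-i}$, and $R_1\not\sim R_i$.

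\textbf{Step 1: a diagonal $u$.} For the two equivalences I would take
\[
u=\begin{pmatrix}1&0\\0&i\end{pmatrix}, \qquad u\otimes u=\mathrm{diag}(1,i,i,-1)
\]
in the ordered basis $\{e_{00},e_{01},e_{10},e_{11}\}$. Write $u\otimes u=\mathrm{diag}(d_{00},d_{01},d_{10},d_{11})$. Conjugation by a diagonal matrix rescales the $(a,b)$ entry by $d_a/d_b$.

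\textbf{Step 2: checking the entries.} The nonzero entries of each $R_\omega$ lie on the diagonal, in the middle $2\times 2$ block indexed by $\{01,10\}$, and in the corner positions $(00,11)$ and $(11,00)$.
\begin{itemize}
\item Diagonal entries are unchanged, since $d_a/d_a=1$.
\item Middle-block entries are unchanged, since $d_{01}/d_{10}=i/i=1$.
\item The two corner entries are multiplied by $d_{00}/d_{11}=1/(-1)=-1$ and $d_{11}/d_{00}=-1$ respectively.
\end{itemize}
Comparing with the explicit matrices in the examples above, $R_1$ and $R_{-1}$ differ exactly by the sign of the two corner entries. The same holds for $R_i$ and $R_{-i}$. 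Hence
\[
(u\otimes u)R_1(u\otimes u)^{-1}=R_{-1}, \qquad (u\otimes u)R_i(u\otimes u)^{-1}=R_{-i}.
\]
Note that $(u\otimes u)^{-1}=u^{-1}\otimes u^{-1}$, as the definition requires.

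\textbf{Step 3: the classes are distinct.} The main point is to find an invariant, and the determinant suffices because it is preserved under conjugation by the invertible map $u\otimes u$.
\begin{itemize}
\item $R_1=\tfrac{1}{\sqrt2}(\I_4+P)$, where $P$ is the anti-diagonal permutation matrix. Since $P$ has eigenvalues $1,1,-1,-1$, $R_1$ has eigenvalues $\sqrt2,\sqrt2,0,0$. So $R_1$ is singular and $\det R_1=0$. Visibly, its rows $1$ and $4$ coincide.
\item $R_i$ is unitary, as noted in the remark preceding the proposition; one can also compute its determinant directly from the two $2\times 2$ blocks. So $\det R_i\neq 0$.
\end{itemize}
Therefore no $u$ can conjugate $R_1$ to $R_i$. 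Combined with Steps 1 and 2 and transitivity, $R_{\pm1}$ is inequivalent to $R_{\pm i}$.

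The only real choice in the argument is finding the diagonal $u$ in Step 1. Diagonal scalings are the natural candidates, because they preserve the sparsity pattern of $R_\omega$. The requirement $d_{00}/d_{11}=-1$ together with $d_{01}=d_{10}$ forces $u_1^2=-1$ when $u_0=1$, giving the choice above.
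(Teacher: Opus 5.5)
Your proof is correct. For the two equivalences you follow the paper's route of conjugating by a diagonal $u$. The paper uses $u=\mathrm{diag}(1,-i)$ for $R_1\sim R_{-1}$ and $u=\mathrm{diag}(1,i)$ for $R_i\sim R_{-i}$. As you observe, any $u=\mathrm{diag}(1,\pm i)$ gives $u\otimes u=\mathrm{diag}(1,\pm i,\pm i,-1)$, which flips exactly the two corner entries. So a single $u$ serves both pairs, and you also carry out the entrywise check that the paper leaves implicit.

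The genuine difference is in the separation step. The paper argues only that $R_1$ is not unitary while $R_i$ is. But conjugation by $u\otimes u$ preserves unitarity only when $u$ is unitary, and Definition \ref{def:equiv_sol} allows any invertible $u$. For example, conjugating $R_i$ by $u=\mathrm{diag}(1,2)$ multiplies its corner entries by $1/4$ and $4$, and the result is not unitary. So, as written, the paper's argument does not rule out an equivalence via a non-unitary $u$. To complete it one would have to add a spectral fact: $R_1$, with eigenvalues $\sqrt2,\sqrt2,0,0$, is not even similar to a unitary matrix.

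Your determinant comparison uses a genuine invariant of the relation: $\det R_1=0$, while $\det R_i=1$ (each of its two $2\times 2$ blocks has determinant $1$). This settles the point directly, so your distinctness step is the more rigorous of the two.
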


    \begin{proof}
        To see $R_1 \sim
        R_{-1}$, apply Definition \ref{def:equiv_sol} with
        \begin{align*}
            u = 
            \begin{pmatrix}
                1 & 0 \\
                0 & -i
            \end{pmatrix}.
        \end{align*}
    
        Similarly, to see $R_i \sim R_{-i}$, apply Definition \ref{def:equiv_sol} with 
        \begin{align*}
            u = 
            \begin{pmatrix}
            1 & 0 \\
            0 & i
            \end{pmatrix}.
        \end{align*}
        To see that these two equivalence classes are disjoint, observe that $R_1$ is not unitary, while $R_i$ is. 
    \end{proof}
    
    \section{The Faddeev-Reshetikhin-Takhtajan (FRT) construction}
    \label{sec:FRTSect}

In this section, we employ the Fadeev--Reshetikhin--Takhtadzhyan construction \cite{FRT}, commonly known as the FRT construction, to Gaussian solutions $R$, described in Equations (\ref{sln1}) and (\ref{eqn:Ri}),  to obtain bialgebras, which we denote by $A(R)$. We adapt the results detailed in \cite[\S 5.4, Theorem 60]{CMZ} to our setting. We observe that in prior work by (\cite{LR1998} and others) the FRT construction has been used to produce bialgebras, however, our approach involves working specifically with Gaussian solutions and analyzing the resulting families of bi/Hopf algebras. 

We first briefly describe the FRT construction. Let $M$ be the vector space spanned by $\{e_0, e_1\}$ and let $M^{*}$ be the dual module to $M$ generated by the dual basis $\{p^0, p^1\}$, that is, $\langle p^i, e_j \rangle = \delta^i_j$ for $0 \le i, j \le 1$. Let $C = (C, \Delta, \varepsilon)$ be the comatrix coalgebra and $\rho = M \to M \otimes C$ be defined by $\rho(m_l) = m_v \otimes c^v_l$. The pair $(M, \rho)$ is a right $C$-comodule if and only if the matrix $(c^v_l)$ is comultiplicative, that is, if and only if 
\begin{align}
\label{comultmatrix}
\Delta(c^j_k) &= c^j_u \otimes c^u_k \text{ and } \varepsilon(c^j_k) = \delta^j_k,
\end{align}
for all $i, j, k, l = 0, 1$. Recall that the Einstein summation convention is being used. 

We construct the tensor algebra $T(C)$ by endowing it with a unique bialgebra structure $(T(C), M, u, \overline{\Delta}, \overline{\varepsilon})$ such that $\overline{\Delta}(c) = \Delta(c)$ and $\overline{\varepsilon}(c) = \varepsilon(c)$ for all $c \in C$. Moreover, the linear map $\rho$ endows $M$ with a right $T(C)$--comodule structure. One can then compute the Yang-Baxter obstructions, $z^{ij}_{kl}$, by the formula
\begin{align}
\label{z^ij_kl}
z^{ij}_{kl} = x^{ij}_{vu}c^u_kc^v_l - x^{vu}_{lk}c^i_vc^j_u,
\end{align}
for all $i, j, k, l = 0, 1$. Consider the two-sided ideal of $T(C)$, $I$, generated by $z^{ij}_{kl}$. As $I$ is a biideal of $T(C)$, 
\[A(R) = T(C)/I\]
is a bialgebra and $M$ has a right $A(R)$-comodule structure. Thus, $A(R)$ is the free algebra that is generated by $c^i_j$ for $i, j = 0, 1$ such that the Y--B obstructions, $z^{ij}_{kl}$, vanish for all $i, j, k, l = 0, 1$

Now, consider $R = R_1$, the Gaussian solution corresponding to $\omega = 1$, given in Example \ref{sln1}.
We denote by $x^{ij}_{uv}$ the family of scalars such that $R(e_u \otimes e_v) = x^{ij}_{uv} e_{i} \otimes e_j$ for all $u,v = 0,1$. 

\begin{theorem}
\label{thm:A(R1)_presentation}
The bialgebra $A(R_1)$ is generated by $a$, $b$, $c$, and $d$ and is subject to the following relations
\begin{align*}
a^2 &= d^2, & b^2 &= c^2, & ab &= dc, & ac &= db,\\ 
ad &= da, & ba &= cd, & bc &= cb, & bd &= ca,
\end{align*}
{\noindent where $c^0_0 = a, c^0_1 = b, c^1_0 = c,$ and $c^1_1 = d$. Moreover, the comultiplication and the counit of $A(R_1)$ are given as follows.}
\begin{align*}
\Delta(a) &= a \otimes a + b \otimes c,&\Delta(b) &= a \otimes b + b \otimes d,&\Delta(c) &= c \otimes a + d \otimes c,\\
\Delta(d) &= c \otimes b + d \otimes d, 
&\varepsilon(a) &= 1, \quad \varepsilon(b) = 0, &
&\varepsilon(c) = 0, \quad
\varepsilon(d) = 1. 
\end{align*}
\end{theorem}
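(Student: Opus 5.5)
The plan is to compute the sixteen obstructions $z^{ij}_{kl}$ of \eqref{z^ij_kl} directly from the entries of $R_1$, then simplify the resulting family of relations to the eight listed. The coalgebra structure needs no separate argument: the formulas for $\Delta$ and $\varepsilon$ are \eqref{comultmatrix} written out in the letters $a,b,c,d$, and they descend to $A(R_1)=T(C)/I$ because $I$ is a biideal.

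\textbf{Step 1: the structure constants.} Read off $x^{ij}_{uv}$ from the matrix in \eqref{sln1}. The columns of $R_1$ (ordered $e_{00},e_{01},e_{10},e_{11}$) give
\[
R_1(e_{uv}) = \tfrac{1}{\sqrt{2}}\bigl(e_{uv}+e_{\bar u\bar v}\bigr),
\]
where $\bar{\ }$ denotes complementing a bit. Hence $x^{ij}_{uv}=\tfrac1{\sqrt2}$ exactly when $(i,j)=(u,v)$ or $(i,j)=(\bar u,\bar v)$, and $x^{ij}_{uv}=0$ otherwise. The common factor $1/\sqrt2$ can then be dropped from every relation.

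\textbf{Step 2: the obstructions.} Substituting into \eqref{z^ij_kl}, each $z^{ij}_{kl}$ collapses to four monomials:
\[
z^{ij}_{kl} \propto c^j_k c^i_l + c^{\bar j}_k c^{\bar i}_l - c^i_l c^j_k - c^i_{\bar l} c^j_{\bar k}.
\]
Here I must be careful with the index order in $x^{ij}_{vu}c^u_kc^v_l$: the lower indices of $x$ appear swapped relative to the $c$'s. I will recheck this on one explicit case before doing the rest.

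\textbf{Step 3: reduce to the eight relations.} Run through all sixteen tuples $(i,j,k,l)$ with $c^0_0=a$, $c^0_1=b$, $c^1_0=c$, $c^1_1=d$. Some cases are identically zero. For example, when $i=j$ and $k=l$ the first and third monomials cancel, leaving $c^{\bar i}_k c^{\bar i}_k - c^i_{\bar k}c^i_{\bar k}$; this yields relations such as $c^2-b^2$ and $a^2-d^2$. The remaining cases yield four-term relations. The main obstacle is this bookkeeping: showing that the ideal generated by the sixteen expressions equals the ideal generated by the eight listed binomials. I expect the symmetry $(i,j,k,l)\mapsto(\bar i,\bar j,\bar k,\bar l)$ to pair up obstructions and make them coincide up to sign. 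Within each pair, I then expect to combine a four-term relation with another obstruction, for instance by adding and subtracting, so as to isolate the binomials $ab=dc$, $ad=da$, $bc=cb$, and so on.

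\textbf{Step 4: check both inclusions.} First, every $z^{ij}_{kl}$ should be a linear combination of the eight listed relations. Second, each of the eight relations should be obtained from the $z$'s. Following the paper's Appendix~\ref{app:A1zijkl}, I will tabulate all sixteen computations and check them, for example with computer algebra, to guard against index errors.
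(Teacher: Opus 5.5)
Your plan is the paper's proof: compute the sixteen obstructions $z^{ij}_{kl}$ (exactly Table~\ref{tab:R1_zijkl}) and read off the ideal, with $\Delta$ and $\varepsilon$ coming from \eqref{comultmatrix} and the biideal property; your reduced formula in Step~2 is correct. Your Step~3 forecast is off, though. In each of the four cases ($i=j$ or not, $k=l$ or not) exactly one pair of the four monomials cancels, so no obstruction vanishes identically and none is four-term. Each is already a binomial; for instance, when $i\neq j$ and $k\neq l$ it is the commutator $c^j_kc^i_l-c^i_lc^j_k$, which gives $bc=cb$ and $ad=da$. The map $(i,j,k,l)\mapsto(\bar i,\bar j,\bar k,\bar l)$ sends each obstruction to its negative, so the sixteen $z^{ij}_{kl}$ are precisely $\pm$ the eight listed relations, and Step~4 is immediate with no adding or subtracting needed.
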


\begin{proof}
In Table \ref{tab:R1_zijkl} of Appendix \ref{app:A1zijkl}, we use the formula given in Equation (\ref{z^ij_kl}) to compute the Y--B obstructions, $z^{ij}_{kl}$, for all $i, j, k, l = 0, 1$ and thus, the ideal $I$. The above presentation of $A(R)$ follows immediately.
\end{proof}

\begin{theorem}
\label{thm:A(Ri)_presentation}
The bialgebra $A(R_i)$ is generated by $a$, $b$, $c$, and $d$ and is subject to the following relations
\begin{align*}
a^2 &= d^2, & b^2 &= c^2, & ab &= ba = 0, & ac &= ca = 0,\\  bd &= db = 0 , & cd &= dc = 0, & bc &= cb, & ad &= 2bc + da.
\end{align*}
Moreover, the comultiplication and the counit of $A(R_i)$ are identical to those of the bialgebra, $A(R_1)$. 
\end{theorem}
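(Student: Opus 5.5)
The plan is to follow the proof of Theorem \ref{thm:A(R1)_presentation}: read off the structure constants of $R_i$, compute the sixteen Y--B obstructions $z^{ij}_{kl}$ from Equation (\ref{z^ij_kl}), and reduce the linear span of these quadratic elements to the stated relations.

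\textbf{Coalgebra structure.} The comultiplication and counit do not depend on $R$ at all. They are those of the comatrix coalgebra $C$, given by Equation (\ref{comultmatrix}), extended multiplicatively to $T(C)$ and then passed to the quotient by the biideal $I$. So, with $c^0_0=a$, $c^0_1=b$, $c^1_0=c$, $c^1_1=d$, the formulas for $\Delta$ and $\varepsilon$ are literally the same as in Theorem \ref{thm:A(R1)_presentation}. Only the ideal $I$ changes.

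\textbf{Structure constants.} From Equation (\ref{eqn:Ri}), with $R(e_u\otimes e_v)=x^{ij}_{uv}e_i\otimes e_j$ read off column by column, and ignoring the common factor $1/\sqrt2$, I get
\begin{align*}
R(e_{00}) &= e_{00}+i\,e_{11}, & R(e_{01}) &= e_{01}+e_{10},\\
R(e_{10}) &= -e_{01}+e_{10}, & R(e_{11}) &= i\,e_{00}+e_{11}.
\end{align*}
The factor $1/\sqrt2$ can be dropped because $z^{ij}_{kl}$ is homogeneous of degree one in $R$, so it only rescales each generator of $I$.

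\textbf{Computing the obstructions.} I will substitute these $x$'s into
\[
z^{ij}_{kl}=x^{ij}_{vu}c^u_kc^v_l-x^{vu}_{lk}c^i_vc^j_u
\]
for all sixteen choices of $(i,j,k,l)$. Each $x$ has only two nonzero entries in the relevant index pattern, so each $z^{ij}_{kl}$ is a combination of at most four monomials of degree two in $a,b,c,d$.

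I should first sanity-check the index convention (the placement $vu$ versus $uv$, and $lk$) by running the same procedure on $R_1$. It should reproduce Theorem \ref{thm:A(R1)_presentation}. The two matrices differ only by the sign in the $(01,10)$ entry and the factors $i$ in the corner entries. So the $R_i$ table should look like the $R_1$ table of Appendix \ref{app:A1zijkl} with some signs flipped and some factors of $i$ inserted. I would record the results in a table parallel to Table \ref{tab:R1_zijkl}.

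\textbf{Reducing the relations.} This is where the actual content lies. Several obstructions come in pairs related by the symmetry $(i,j,k,l)\mapsto(1-i,1-j,1-k,1-l)$. For $R_1$, such pairs gave identical or equivalent relations like $ab=dc$. For $R_i$, the sign change in the middle block should instead produce pairs of the shape $ab+ba=\lambda(dc+cd)$ and $ab-ba=\mu(dc-cd)$, possibly multiplied by $i$.

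I will set up the sixteen relations as a linear system in the sixteen degree-two monomials $xy$ and row-reduce. Combining each such pair should force the vanishing of the off-diagonal mixed products $ab,ba,ac,ca,bd,db,cd,dc$. The remaining relations should then give $a^2=d^2$, $b^2=c^2$, $bc=cb$, and the single non-homogeneous-looking relation $ad=2bc+da$.

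\textbf{Main obstacle.} I expect the main difficulty to be bookkeeping. The factors of $i$ must cancel correctly, for example in a relation like $i\,a^2-i\,d^2=0$, and the coefficient $2$ in $ad-da=2bc$ must come out right. I must also confirm that the reduced list spans exactly the same subspace as the sixteen $z^{ij}_{kl}$, so that no relation is lost or introduced. For this I would compare the rank of the coefficient matrix of the $z$'s with that of the stated relations: eleven independent relations, namely eight vanishing products plus $a^2-d^2$, $b^2-c^2$, $ad-da-2bc$, together with $bc-cb$, for twelve in total. Then $A(R_i)=T(C)/I$ has the stated presentation.
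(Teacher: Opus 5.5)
Your route is the paper's: the coalgebra structure comes from the comatrix coalgebra and does not depend on $R$, and the relations are read off from the sixteen obstructions $z^{ij}_{kl}$ (Table~\ref{tab:Ri_zijkl}). The gap is the convention you commit to, because it changes the answer. Reading $x^{ij}_{uv}$ off the columns, as you do, gives $x^{01}_{10}=-1$ and $x^{10}_{01}=1$. The obstruction formula then yields
\[
z^{01}_{10}=(da-bc)-(ad+bc)=da-ad-2bc,\qquad z^{10}_{01}=(ad+cb)-(da-cb)=ad-da+2cb .
\]
So your computation produces $ad=da-2bc$, not $ad=2bc+da$. The other relations come out the same: the eight vanishing products, $a^2=d^2$, $b^2=c^2$, $bc=cb$, and total rank $12$. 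However, several rows disagree with the paper's table. For instance, you get $z^{01}_{00}=ca-2ac-ibd$, where Table~\ref{tab:Ri_zijkl} has $ca-ibd$.

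The paper's table is what one gets by taking $x^{ij}_{uv}$ from row $(uv)$ and column $(ij)$ of the displayed matrix, i.e.\ $x^{01}_{10}=1$ and $x^{10}_{01}=-1$. Equivalently, it comes from running your procedure on $R_i^{T}=\tau R_i\tau$. The two readings give bialgebras with opposite multiplications and the same comultiplication.

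Your proposed sanity check on $R_1$ cannot detect this. Since $R_1$ is symmetric and $\tau R_1=R_1\tau=R_1$, every one of these conventions gives the same table for $R_1$. Hence the step ``the remaining relations should then give $ad=2bc+da$'' fails as you have set it up. To obtain the statement as written, you must use the row reading that Table~\ref{tab:Ri_zijkl} implicitly uses. The text's stated convention $R(e_u\otimes e_v)=x^{ij}_{uv}e_i\otimes e_j$ points to your column reading, so this is a genuine convention mismatch you need to resolve. Otherwise you will be proving the presentation of the opposite bialgebra.
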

\begin{proof}
The proof follows similarly to that of Theorem \ref{thm:A(R1)_presentation}. The Y--B obstructions, $z^{ij}_{kl}$, for all $i, j, k, l = 0, 1$, when $R = R_i$ are listed in Table \ref{tab:Ri_zijkl} in Appendix \ref{app:A1zijkl}. 
\end{proof}

\begin{rem}
The bialgebras $A(R)$ described in Theorems \ref{thm:A(R1)_presentation} and \ref{thm:A(Ri)_presentation} above are coquasitriangular. To define the bilinear map $\sigma: A(R) \otimes A(R) \to \Bbbk$ such that $(A(R), \sigma)$ is coquasitritriangular and that the Gaussian solution $R$ remains invariant with respect to $\sigma$, first define $\sigma$ on $C \otimes C$, extend to $T(C)$, and ensure that all hypotheses on $\sigma$ of are satisfied \cite[see e.g., Definition 7, \S5.3]{CMZ}.
\end{rem}

\section{Twisting a Gaussian Solution to the constant quantum Yang-Baxter equation}
\label{sec:twists}
The idea of a twist of a $\mathbb{Z}$-graded algebra was introduced in \cite[Section 8]{ATV1991}. With the notion of twisting systems, Zhang showed in \cite[Theorem 3.1]{Zhang1996} that for two $\mathbb{N}$-graded algebras $A$ and $B$ generated in degree one, $A$ is isomorphic to a twisted algebra of $B$ if and only if the corresponding graded module categories are equivalent.

\begin{defn} 
Let $A$ be a $\mathbb {Z}$-graded algebra and $\phi$ be a graded automorphism of $A$. The \textit{right Zhang twist} $A^\phi$ of $A$ by $\phi$ is the algebra that coincides with $A$ as a graded $\Bbbk$-vector space, with the twisted (or deformed) multiplication 
\[r *_\phi s = r \phi^{|r|}(s),
\,\, \text{for any homogeneous elements } r,s \in A. \]
Here we denote the grading degree of $r$ by $|r|$. Similarly, the \textit{left Zhang twist} $\prescript{\phi}{}{A}$ of $A$ by $\phi$ is defined with the twisted  multiplication 
\[r *_\phi s = \phi^{|s|}(r) s,
\,\, \text{for any homogeneous elements } r,s \in A. \]
We omit the subscript in the twisted multiplication $ *_\phi$ below. 
\end{defn} 

In prior work by the second author \cite{HNUVVW1}, algebras twisted by Zhang twists are associated to 2-cocycle twisted bialgebras via the notion of a twisting pair of algebra automorphisms.

\begin{defn}\cite[Definition D.]{HNUVVW1}
\label{def:twisting_pair}
    Let $(B, m, u, \Delta, \varepsilon)$ be a bialgebra over $\Bbbk$. We say that a pair $(\phi_1, \phi_2)$ of algebra automorphisms of $B$ is a twisting pair if 
    \begin{align*}
        \Delta \circ \phi_1 &= (\id \otimes \phi_1) \circ \Delta, \\
        \Delta \circ \phi_2 &= (\phi_2 \otimes \id) \circ \Delta, \\
        \varepsilon \circ (\phi_1 \circ \phi_2) &= \varepsilon.
    \end{align*}
\end{defn}


To twist a constant (parameter-independent) Gaussian solution $R \in \End_{\Bbbk}(M) \otimes \End_{\Bbbk}(M)$ by a Zhang twist given by an automorphism of $A(R)$, we consider invertible complex $2\times2$ matrices which, up to a change of basis, will be either a diagonal matrix or a nondiagonal matrix with non-distinct eigenvalues (one $2\times2$ Jordan block). By work in \cite{HNUVVW1}, we know that such a twist will be defined by a twisting pair. Indeed, in \cite[Lemma 4.1.4]{HNUVVW1}, every twisting pair $(\phi_1, \phi_2)$ of $A(R)$ is determined by its action on the generators $\{a,b,c,d\} = \{c^0_0, c^0_1, c^1_0, c^1_1\} = \{c^i_j\}_{0 \leq i,j \leq 1}$ such that 
\begin{align*}
    \phi_1(c^i_j) = \sum_{0 \leq u \leq 1} \alpha^i_uc^u_j \quad \text{ and } \quad\phi_2(c^i_j) = \sum_{0 \leq u \leq 1} \beta^u_j c^i_u 
\end{align*}
where $\alpha = (\alpha^i_j)$ is an invertible element of $M_2(\C)$ with inverse $\beta =  (\beta^i_j)$, and satisfying 
\begin{align}
\label{eqn:twist_conditions}
\sum_{0 \leq u,v \leq 1} R^{ij}_{vu} \alpha^{u}_{k} \alpha^{v}_{l} = \sum_{0 \leq u,v \leq 1} R^{vu}_{lk}\alpha^i_v \alpha^j_u  
\end{align}
for all $0 \leq i,j,k,l \leq 1.$

In Proposition \ref{prop:valid_pairs} below, we determine the $2 \times 2$ complex matrices that define a twisting pair.

\begin{prop}
\label{prop:valid_pairs}
Suppose $\alpha = (\alpha^i_j) \in M_2(\C)$, for $i, j = 0, 1$. If the matrix $\alpha \in M_2(\C)$ yields a twisting pair for Gaussian solutions $R_1$ and $R_{i} \in \End_{\Bbbk}(M) \otimes \End_{\Bbbk}(M)$, then it is of the following form: 
\label{validmatrixform}
\begin{align}
        \begin{pmatrix}
            \lambda & 0 \\
            0 & \pm \lambda
        \end{pmatrix},
    \end{align}
    where $\lambda \neq 0 \in \C$.
\end{prop}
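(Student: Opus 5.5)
The plan is to make condition (\ref{eqn:twist_conditions}) completely explicit for $R=R_1$ and $R=R_i$, and then read off the constraints on the four entries $\alpha^0_0,\alpha^0_1,\alpha^1_0,\alpha^1_1$. By \cite[Lemma 4.1.4]{HNUVVW1}, a matrix $\alpha$ yields a twisting pair exactly when it is invertible and satisfies (\ref{eqn:twist_conditions}) for all $0\le i,j,k,l\le 1$. So it suffices to show that these sixteen scalar equations force $\alpha$ to have the stated form. Since the statement only asks for the necessary direction, I will not need to check that every matrix of the form $\mathrm{diag}(\lambda,\pm\lambda)$ actually works.

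\textbf{Step 1: the equations for $R_1$.} Using $R(e_u\otimes e_v)=x^{uv}$-coefficients as in the FRT setup, I will list the nonzero entries $R^{ij}_{uv}$ of $R_1$ from (\ref{sln1}). Up to the common factor $1/\sqrt2$, which cancels from both sides, these are exactly the entries with $i+j\equiv u+v \pmod 2$ and either $(i,j)=(u,v)$ or $(i,j)=(1-u,1-v)$. Substituting into (\ref{eqn:twist_conditions}) turns each side into a sum of at most two quadratic monomials in the $\alpha$'s. Because of the swapped index placement ($R^{ij}_{vu}$ paired with $\alpha^u_k\alpha^v_l$ on the left, and $R^{vu}_{lk}$ with $\alpha^i_v\alpha^j_u$ on the right), the resulting equations are \emph{not} simply $[R_1,\alpha\otimes\alpha]=0$. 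I will group the sixteen equations according to the parity of $i+j$ and of $k+l$.

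\textbf{Step 2: kill the off-diagonal entries.} The equations with $i+j\not\equiv k+l$ should produce relations of the form $\alpha^0_0\alpha^0_1=\pm\alpha^1_1\alpha^1_0$, together with relations that are pure products such as $\alpha^0_1\alpha^1_1=0$ and $\alpha^0_0\alpha^1_0=0$. I then combine these with invertibility, $\alpha^0_0\alpha^1_1-\alpha^0_1\alpha^1_0\neq 0$, by cases:
\begin{itemize}
\item if $\alpha^0_1\neq0$, the product relations force entries to vanish until the determinant is $0$, a contradiction;
\item the same argument, by symmetry, rules out $\alpha^1_0\neq 0$.
\end{itemize}
Hence $\alpha^0_1=\alpha^1_0=0$ and $\alpha$ is diagonal with nonzero entries.

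\textbf{Step 3: the diagonal entries.} With $\alpha$ diagonal, the equations with $(i,j)=(1-k,1-l)$, for example $i=j=0$, $k=l=1$, reduce to $(\alpha^0_0)^2=(\alpha^1_1)^2$. This gives $\alpha^1_1=\pm\alpha^0_0=\pm\lambda$ with $\lambda\neq0$.

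\textbf{Step 4: $R_i$.} By Proposition \ref{prop:equivclassslns}, $R_i$ is not equivalent to $R_1$, so I cannot transfer the result directly. However, $R_i$ from (\ref{eqn:Ri}) has the same support pattern as $R_1$; only the signs and factors of $i$ on the antidiagonal differ. I will therefore repeat Steps 1--3, tracking the coefficients. The product relations in Step 2 come from positions where one side of the equation has no term at all, so they are insensitive to these coefficients. In Step 3 the factors $i\cdot i$ and $(-1)(1)$ appear symmetrically on both sides, so the conclusion $(\alpha^1_1)^2=(\alpha^0_0)^2$ should again be all that survives.

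The main obstacle is Step 2: correctly transcribing the index conventions in (\ref{eqn:twist_conditions}) and making sure that the pure product relations genuinely occur. If instead only binomial relations appear, I will use them in pairs to deduce $\alpha^0_1(\alpha^0_0\mp\alpha^1_1)=0$-type factorizations and split into cases. As a check, I will verify the bookkeeping in Mathematica, mirroring the appendix computations of the $z^{ij}_{kl}$.
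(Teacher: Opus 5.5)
\textbf{Step 2 breaks down for $R_1$, and no case analysis can rescue it, because for $R_1$ the conclusion is false.}

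Your remark that (\ref{eqn:twist_conditions}) is not simply a commutation condition is mistaken. Its left side is the $(ij,kl)$ entry of $R\,\tau\,(\alpha\otimes\alpha)$, and its right side is the $(ij,kl)$ entry of $(\alpha\otimes\alpha)\,R\,\tau$. Since $\tau$ commutes with $\alpha\otimes\alpha$, the condition is exactly $R(\alpha\otimes\alpha)=(\alpha\otimes\alpha)R$.

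Now write $R_1=\frac{1}{\sqrt{2}}(\I_4+X\otimes X)$ with $X=\left(\begin{smallmatrix}0&1\\1&0\end{smallmatrix}\right)$. Then the condition reads $X\alpha\otimes X\alpha=\alpha X\otimes\alpha X$, which for invertible $\alpha$ holds if and only if $X\alpha=\pm\alpha X$. In your coordinates $\alpha=\left(\begin{smallmatrix}p&q\\r&s\end{smallmatrix}\right)$, the sixteen equations for $R_1$ collapse to $p^2=s^2$, $q^2=r^2$, $pq=rs$, $pr=qs$. Their solution set is exactly $(r,s)=\pm(q,p)$, with $q$ free. So no pure product relations occur, and the binomial ones never force $q=r=0$.

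Concretely, $\alpha=X$ and $\alpha=2\I_2+X$ (eigenvalues $3$ and $1$) both satisfy (\ref{eqn:twist_conditions}) for $R_1$. Consistently with this, the substitution $a\leftrightarrow c$, $b\leftrightarrow d$ preserves all eight relations of Theorem \ref{thm:A(R1)_presentation}.

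Following the paper's route would not help. It checks only diagonal matrices and a single Jordan block, but (\ref{eqn:twist_conditions}) is not invariant under conjugating $\alpha$ alone, and the examples above are diagonalizable without being diagonal.

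\textbf{Your justification in Step 4 is also backwards.} $R_1$ and $R_i$ have the same support, so any relation forced by the support pattern alone would hold for both. The pure product relations that do occur for $R_i$ come from its coefficients $i$ and $-1$:
\begin{itemize}
\item the index $(0,1,1,0)$ yields $qr=-qr$, which is trivial for $R_1$;
\item $(0,0,0,1)$ and $(0,0,1,0)$ between them yield both $i\,rs=pq$ and $i\,rs=-pq$, versus $rs=pq$ twice for $R_1$;
\item $(0,1,0,0)$ and $(1,0,0,0)$ similarly give $pr=qs=0$;
\item $(0,0,0,0)$ and $(0,0,1,1)$ give $q^2=r^2$ and $p^2=s^2$.
\end{itemize}
Hence $q=r=0$ and $s=\pm p\neq 0$.

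So your direct coordinate approach, which is sounder than the paper's similarity reduction, does prove the statement for $R_i$. It therefore also proves it under the reading in which $\alpha$ must work for $R_1$ and $R_i$ simultaneously. For $R_1$ alone, however, the correct conclusion is $X\alpha=\pm\alpha X$, not diagonality.
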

\begin{proof}
If $\alpha \in M_2(\C)$ is invertible, then it is similar to either a diagonal matrix with eigenvalues $\lambda_1$ and $\lambda_2$, or a block Jordan matrix with eigenvalue $\lambda$. In the diagonal case, for both $R_1$ and $R_i$, by Equation \ref{eqn:twist_conditions}, $\lambda_1 = \pm \lambda_2$. In the case where the matrix $\alpha \in M_2(\C)$ is non-diagonal (that is, a block Jordan matrix), for Equation (\ref{eqn:twist_conditions}) to hold when $(i,j,k,l) = (0,0,0,0)$, $1 + \lambda^2$ must equal $\lambda^2$ when $R = R_1$. It follows that a non-diagonal matrix $\alpha \in M_2(\C)$ does not induce a twisting pair. 
For $R = R_i$ when $(i,j,k,l) = (0,0,0,0)$, we have $\lambda^2 + i = \lambda ^2$. We conclude that a non-diagonal $\alpha \in M_2(\C)$ does not induce a twisting pair when $R = R_1$ or when $R = R_i$.
\end{proof}

\begin{prop}
\label{prop:all_pairs}
Retain the above notation. The twisting pairs $(\phi_1, \phi_2)$ of $A(R_1)$ and $A(R_i)$ are defined by
\begin{enumerate}[(a)]
\item $\phi_1(c^i_j) = \lambda c^i_j$ and $\phi_2(c^i_j) = \lambda^{-1} c^i_j$, for $0 \leq i,j \leq 1$, when $\alpha \in M_2(\C)$ has one distinct eigenvalue, and
\item $\phi_1(c^i_j) = -\lambda c^i_j$ and $\phi_2(c^i_j) = (-\lambda)^{-1} c^i_j$, for $0 \leq i,j \leq 1$, when $\alpha \in M_2(\C)$ has two distinct eigenvalues.
\end{enumerate}
\end{prop}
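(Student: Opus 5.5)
The plan is to combine Proposition \ref{prop:valid_pairs} with the description of twisting pairs from \cite[Lemma 4.1.4]{HNUVVW1}, and then check directly that every candidate actually works. By Proposition \ref{prop:valid_pairs}, any $\alpha$ inducing a twisting pair of $A(R_1)$ or $A(R_i)$ has the form $\alpha=\operatorname{diag}(\lambda,\pm\lambda)$ with $\lambda\neq 0$. Its inverse is then $\beta=\operatorname{diag}(\lambda^{-1},\pm\lambda^{-1})$. I split into two cases: $\alpha=\lambda\I_2$, where there is one distinct eigenvalue, and $\alpha=\operatorname{diag}(\lambda,-\lambda)$, where there are two distinct eigenvalues.

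In each case I substitute into $\phi_1(c^i_j)=\alpha^i_u c^u_j$ and $\phi_2(c^i_j)=\beta^u_j c^i_u$.
\begin{itemize}
\item For $\alpha=\lambda\I_2$ this immediately gives $\phi_1(c^i_j)=\lambda c^i_j$ and $\phi_2(c^i_j)=\lambda^{-1}c^i_j$, which is (a).
\item For $\alpha=\operatorname{diag}(\lambda,-\lambda)$ one gets $\phi_1(c^i_j)=(-1)^i\lambda c^i_j$ and $\phi_2(c^i_j)=(-1)^j\lambda^{-1}c^i_j$. Concretely,
\[
\phi_1:\ a\mapsto\lambda a,\ b\mapsto\lambda b,\ c\mapsto-\lambda c,\ d\mapsto -\lambda d .
\]
\end{itemize}
Here I must reconcile the computation with the wording of (b). A uniform scaling by $-\lambda$ is just case (a) with $\lambda$ replaced by $-\lambda$. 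So I will read (b) as the sign-twisted scaling above, with the factor $-\lambda$ attached to the row (for $\phi_1$) or column (for $\phi_2$) indexed by $1$. I will state this explicitly in the proof.

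For the converse, I verify that each candidate really is a twisting pair.
\begin{enumerate}[(i)]
\item The maps must define algebra automorphisms of $A(R)$. They are invertible linear maps on generators, with inverses of the same shape. Every relation in Theorems \ref{thm:A(R1)_presentation} and \ref{thm:A(Ri)_presentation} is homogeneous quadratic. In case (b) each monomial $c^i_jc^k_l$ is rescaled by $(-1)^{i+k}\lambda^2$ under $\phi_1$, and one checks that the two sides of each relation have the same parity of $i+k$. For example, in $ab=dc$ the left side has $i+k=0$ and the right side has $i+k=2$. The check for $\phi_2$ uses column indices in the same way. The same check also confirms Equation (\ref{eqn:twist_conditions}) entrywise, since $R_1$ and $R_i$ are supported only on entries $R^{ij}_{kl}$ with $i+j\equiv k+l \pmod 2$.
\item The coproduct identities $\Delta\circ\phi_1=(\id\otimes\phi_1)\circ\Delta$ and $\Delta\circ\phi_2=(\phi_2\otimes\id)\circ\Delta$ hold on generators because $\phi_1$ acts by left multiplication by $\alpha$ and $\phi_2$ by right multiplication by $\beta$ on the comultiplicative matrix $(c^i_j)$. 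Both sides are algebra maps, so the identities extend to all of $A(R)$.
\item Finally, $\phi_1\circ\phi_2$ acts by $c\mapsto \alpha c\beta$. Hence $\varepsilon(\phi_1\phi_2(c^i_j))=(\alpha\beta)^i_j=\delta^i_j$, and since $\varepsilon$ is an algebra map this gives $\varepsilon\circ(\phi_1\circ\phi_2)=\varepsilon$.
\end{enumerate}

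The main obstacle is the mismatch between the literal form of (b) and what the computation produces. The rest is a short parity check against the sixteen relations and the support pattern of $R$.
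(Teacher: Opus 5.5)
Your route matches the paper's: Proposition \ref{prop:valid_pairs} plus the cited form of twisting pairs, with an added sufficiency check. You are right that the literal (b) is just (a) with $\lambda\mapsto-\lambda$, and Theorem \ref{thm:twisted_gauss} confirms that (b) is meant to come from $\alpha=\operatorname{diag}(\lambda,-\lambda)$. But your step (ii) is false in exactly that case.

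With the paper's coproduct $\Delta(c^i_j)=c^i_u\otimes c^u_j$, the map $\phi_1(C)=\alpha C$ satisfies $\Delta\circ\phi_1=(\phi_1\otimes\id)\circ\Delta$, not $(\id\otimes\phi_1)\circ\Delta$. Likewise $\phi_2(C)=C\beta$ satisfies $\Delta\circ\phi_2=(\id\otimes\phi_2)\circ\Delta$. These agree with Definition \ref{def:twisting_pair} only for scalar $\alpha$. For your row-sign map,
\[
\Delta(\phi_1(a))=\lambda(a\otimes a+b\otimes c)\neq\lambda(a\otimes a-b\otimes c)=(\id\otimes\phi_1)\Delta(a),
\]
so the pair you settle on for (b) is not a twisting pair. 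The mismatch is already in the paper: its formulas for $\phi_1,\phi_2$ sit on the opposite sides from what its Definition and coproduct require. Since you claim to verify the axioms, you have to resolve it. Take $\phi_1(c^i_j)=c^i_u\alpha^u_j$ and $\phi_2(c^i_j)=\beta^i_uc^u_j$. Then (b) reads $\phi_1(c^i_j)=(-1)^j\lambda c^i_j$ and $\phi_2(c^i_j)=(-1)^i\lambda^{-1}c^i_j$. The counit check becomes $\varepsilon(\phi_1\phi_2(c^i_j))=(\beta\alpha)^i_j=\delta^i_j$, and your check (i) still holds. For (i), argue directly from the support condition on $R$, which makes every $z^{ij}_{kl}$ homogeneous in both the row parity $i+j$ and the column parity $k+l$. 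That way you do not rely on Table \ref{tab:Ri_zijkl}, some of whose entries appear miscomputed; for example, $z^{00}_{10}$ should be $i\,dc-ab$.

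Separately, the necessity half, which you take from Proposition \ref{prop:valid_pairs} just as the paper does, fails for $R_1$. Condition (\ref{eqn:twist_conditions}) says that $\alpha\otimes\alpha$ commutes with $R$. That is not invariant under conjugating $\alpha$ alone, so the reduction to Jordan form in that proposition is not legitimate. Write $R_1=\tfrac{1}{\sqrt2}(\I_4+X\otimes X)$ with $X=\begin{pmatrix}0&1\\1&0\end{pmatrix}$. Then $\alpha=X$ satisfies the condition, as does any invertible $\begin{pmatrix}p&q\\q&p\end{pmatrix}$ or $\begin{pmatrix}p&q\\-q&-p\end{pmatrix}$. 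Correspondingly, $\phi_1\colon a\leftrightarrow b,\ c\leftrightarrow d$ and $\phi_2\colon a\leftrightarrow c,\ b\leftrightarrow d$ preserve all eight relations of Theorem \ref{thm:A(R1)_presentation}. They form a twisting pair of $A(R_1)$ that is not on the list. For $R_i$ the conclusion does hold: write $\sqrt2R_i-\I_4=A\otimes B$ with $A=\begin{pmatrix}0&i\\1&0\end{pmatrix}$ and $B=\begin{pmatrix}0&1\\i&0\end{pmatrix}$, and solving $A\alpha\otimes B\alpha=\alpha A\otimes\alpha B$ forces $\alpha=\operatorname{diag}(\lambda,\pm\lambda)$. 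So only the $A(R_i)$ part of the classification can be established along these lines.
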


\begin{proof}   
This follows immediately from Appendix \ref{app:A1zijkl}, and \cite[Lemma 4.1.4]{HNUVVW1}.
\end{proof}

In the next result, we twist the solutions $R_1$ and $R_i$ using the twisting pairs $(\phi_1, \phi_2)$ given in Proposition \ref{prop:all_pairs}. A 2-cocycle twist is first defined from $(\phi_1, \phi_2)$ and applied to $R_1$ and to $R_i$ to produce twisted solutions.

\begin{defn}[2-cocycle]
A \textit{left 2-cocycle} on a bialgebra $B$ is a convolution invertible linear map $\sigma: B \otimes B \to \Bbbk$ satisfying
\label{2cocycle}  
\[
    \sum \sigma(x_1, y_1)\, \sigma (x_2 y_2, z) = \sum \sigma(y_1,z_1)\, \sigma(x,y_2z_2), 
\]
for all $x,y,z \in B$. Similarly, a \textit{right 2-cocycle} on $B$ is a convolution invertible bilinear map $\sigma: B \times B \rightarrow \Bbbk$ satisfying 
\[
    \sum \sigma(x_2, y_2)\, \sigma (x_1 y_1, z) = \sum \sigma(y_2,z_2)\, \sigma(x,y_1z_1).
\]
For any 2-cocycle $\sigma$, we denote its convolution inverse by $\sigma^{-1}$.
\end{defn}

\begin{theorem}
\label{thm:twisted_gauss}
Suppose $\alpha$ and $\beta = \alpha^{-1} \in M_2(\mathbb{C})$ are given by twisting pairs $(\phi_1, \phi_2)$.  If the Gaussian solution $R_{} \in \End_{\Bbbk}(\C^2) \otimes \End_{\Bbbk}(\C^2)$ is twisted by the 2-cocycle $\sigma: A(R) \times A(R) \to \Bbbk$, then
\begin{enumerate}[(a)]
\item the twisted solution, $R^{\sigma}$, is a Gaussian solution, when $\alpha \in M_2(\C)$ has a single eigenvalue, and 
\item the twisted solution $R^{\sigma}$ is a non-Gaussian solution, otherwise.
\end{enumerate}
\end{theorem}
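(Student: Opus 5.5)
The plan is to make the twisted solution completely explicit and then read off both claims from the resulting $4\times 4$ matrices. First, I will use \cite[\S 4.2]{HNUVVW1} to write down the 2-cocycle $\sigma$ attached to a twisting pair $(\phi_1,\phi_2)$. For a Zhang-type twist I expect it to be determined on generators by
\[
\sigma(c^i_j, c^k_l) = \delta^i_j\, \beta^k_l
\]
(up to the left/right convention fixed there), and to be extended to $A(R)$ via the cocycle identity. Since $\rho$ makes $M$ an $A(R)$-comodule, $R^{\sigma}$ is obtained from $R$ by conjugating with the images of $\sigma$ on $M\otimes M$. This should produce a formula of the shape
\[
R^{\sigma} = (\id \otimes \beta)\, R\, (\alpha \otimes \id),
\]
possibly with the tensor factors interchanged depending on conventions. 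Pinning this formula down with the correct factors is the main obstacle, and I would verify it directly on the basis $e_{uv}$ using $\rho(e_l)=e_v\otimes c^v_l$. As a sanity check, I will confirm that the candidate $R^\sigma$ again satisfies (\ref{eq:cQYBE}), which is guaranteed exactly by condition (\ref{eqn:twist_conditions}).

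Second, I will invoke Proposition \ref{prop:valid_pairs} and Proposition \ref{prop:all_pairs}. These reduce the possibilities to $\alpha=\lambda\I_2$ (a single eigenvalue) or $\alpha = \lambda\,\mathrm{diag}(1,-1)$ (two distinct eigenvalues), with $\beta=\alpha^{-1}$.

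In case (a), $\alpha\otimes\beta$ acts by the scalar $\lambda\lambda^{-1}=1$, so the $\lambda$'s cancel. Hence $R^{\sigma}=R$, which is one of the Gaussian solutions $R_1$ or $R_i$ of \S\ref{sec:Gaussian}.

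In case (b), the scalar $\lambda$ again cancels, leaving conjugation by $D=\mathrm{diag}(1,-1)$. In the ordered basis $\{e_{00},e_{01},e_{10},e_{11}\}$, $D\otimes\id=\mathrm{diag}(1,1,-1,-1)$ and $\id\otimes D^{-1}=\mathrm{diag}(1,-1,1,-1)$. Therefore the entry $r_{pq}$ of $R$ gets multiplied by $s_p t_q$, where $s=(1,-1,1,-1)$ and $t=(1,1,-1,-1)$. I will write out $R_1^{\sigma}$ and $R_i^{\sigma}$ explicitly. In particular, the diagonal entries of $R$, all equal to $1/\sqrt2$, become $(1,-1,-1,1)/\sqrt2$.

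To prove that $R^{\sigma}$ is non-Gaussian in case (b), I will use an invariant rather than a direct entry comparison. Every Gaussian solution with $m=2$ listed in \S\ref{sec:Gaussian} has trace $4/\sqrt2=2\sqrt2$, since $U$ has zero diagonal. By contrast, the computation above gives $\operatorname{tr} R^{\sigma}=0$. Trace is invariant under the equivalence of Definition \ref{def:equiv_sol}, because that equivalence is conjugation by $u\otimes u$. Hence $R^{\sigma}$ is not equal, and not even equivalent, to any of $R_{\pm1},R_{\pm i}$, and so it is not Gaussian. The only remaining delicate point is that if the conventions in \cite{HNUVVW1} put $\alpha$ and $\beta$ in other tensor slots, the sign pattern $s_pt_q$ changes. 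I would then recompute the diagonal for that convention, which I expect still yields trace $0$, since the diagonal signs come from $s_pt_p$ for a product of two sign vectors with exactly one sign flip each.
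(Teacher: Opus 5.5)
Your proposal follows the paper's proof step for step. You reduce via Propositions \ref{prop:valid_pairs} and \ref{prop:all_pairs} to $\alpha=\lambda\I_2$ or $\alpha=\lambda\,\mathrm{diag}(1,-1)$. You twist by $\sigma(x,y)=\varepsilon(x)\varepsilon(\phi_2^{|x|}(y))$, which on generators is exactly your guess $\sigma(c^i_j,c^k_l)=\delta^i_j\beta^k_l$. You then read off $R^\sigma$ from \cite[Corollary 4.2.5]{HNUVVW1}.

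The formula the paper quotes there is $(R^\sigma)^{ij}_{kl}=\alpha^i_pR^{pj}_{kv}\beta^v_l$, i.e.\ $R^\sigma=(\alpha\otimes\id)R(\id\otimes\beta)$. This is the mirror image of your guess, but the difference is harmless. The nonzero entries of a Gaussian $R$ sit in row $(ij)$, column $(kl)$ with $i+l\equiv j+k \pmod 2$. So the sign factors $s_is_l$ (paper) and $s_js_k$ (yours) agree on the support, and you get exactly the paper's matrices.

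The genuine difference is the last step. The paper only asserts that $(R_1)^\sigma$ and $(R_i)^\sigma$ are inequivalent to the Gaussian solutions. Your trace invariant ($0$ versus $2\sqrt2$, preserved under conjugation by $u\otimes u$) actually proves it, and even rules out nonzero scalar multiples.

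One caveat applies equally to your argument and to the paper's: Proposition \ref{prop:valid_pairs} does not classify the twisting pairs. Condition (\ref{eqn:twist_conditions}) says precisely that $\alpha\otimes\alpha$ commutes with $R$. With $R$ fixed, this is not invariant under replacing $\alpha$ by a conjugate, so checking only diagonal and Jordan forms is not enough.

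For $R_1=\frac{1}{\sqrt2}(\I_4+X\otimes X)$ with $X=\begin{pmatrix}0&1\\1&0\end{pmatrix}$, the condition becomes $\alpha X=\pm X\alpha$. Hence $\alpha=\beta=X$ is allowed: $\phi_1$ swaps $a\leftrightarrow c$, $b\leftrightarrow d$, and $\phi_2$ swaps $a\leftrightarrow b$, $c\leftrightarrow d$. Both preserve the relations of Theorem \ref{thm:A(R1)_presentation}. This $\alpha$ has the two eigenvalues $\pm1$, yet $(X\otimes\id)R_1(\id\otimes X)=R_1$ is Gaussian, which contradicts (b).

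More generally, the picture for $R_1$ splits as follows:
\begin{itemize}
\item Every invertible $\alpha$ commuting with $X$ returns $R_1$.
\item Every $\alpha$ anticommuting with $X$ has $\operatorname{tr}\alpha=\operatorname{tr}(\alpha X)=0$. Hence $\operatorname{tr}R_1^\sigma=0$, and your trace argument shows the twist is non-Gaussian.
\end{itemize}
So for $R_1$ the correct dichotomy is commuting versus anticommuting with $X$, not the number of eigenvalues. For $R_i$, the solutions of (\ref{eqn:twist_conditions}) really are only $\lambda\I_2$ and $\lambda\,\mathrm{diag}(1,-1)$, so there your proof is complete.
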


\begin{proof}
As a result of Proposition \ref{prop:valid_pairs}, the Gaussian solutions $R_1$ and $R_i$, can both be twisted by either 
\begin{align*}
    (\alpha^i_{j}) = 
\begin{pmatrix}
    \lambda & 0 \\
    0 & \lambda
\end{pmatrix} \quad \text{ or } \quad (\alpha^i_{j}) = 
\begin{pmatrix}
    \lambda & 0 \\
    0 & -\lambda
\end{pmatrix}
\end{align*}
where $\lambda \neq 0 \in \C$ to yield twisting pairs.
Consider first the case where both eigenvalues are equal to $\lambda$. Let
\begin{align*}
    (\alpha^i_{j}) = 
    \begin{pmatrix}
        \lambda & 0 \\
        0 & \lambda
    \end{pmatrix}, \text{ and }(\beta^i_{j}) =
    \begin{pmatrix}
        \lambda^{-1} & 0 \\
        0 & \lambda^{-1}
    \end{pmatrix}.
\end{align*}
Using the twisting pair $(\phi_1, \phi_2)$ of $A(R)$ given in Proposition \ref{prop:all_pairs}, and by work of the second author \cite[Proposition 2.3.2]{HNUVVW1}, we may define a 2-cocycle twist, $\sigma:A(R) \otimes A(R) \to \Bbbk$ by
\begin{align}
\label{2cocycleformula}
\sigma(x,y) &= \varepsilon(x)\varepsilon((\phi_2)^{|x|}(y)),
\end{align}
for any homogeneous elements $x, y \in A(R)$, where $|x|$ denotes the degree of $x$. With respect to generators $a, b, c$, and $d$, we have
\begin{align*}
\sigma(a,a) &= \lambda^{-1}, & \sigma(a, w) &= 0, \text{ for all } w \in A(R)\\
\sigma(b, x) &= 0, \text{ for all } x \in A(R)  & \sigma(c, y) &= 0, \text{ for all } y \in A(R)\\
\sigma(d,d) &= \lambda^{-1} & \sigma(d, z) &= 0, \text{ for all } z \in A(R),\\
\end{align*}
and its convolution inverse $\sigma^{-1}$ may be computed similarly using the formula $\sigma^{-1}(x,y) = \varepsilon(x)\varepsilon((\phi_1)^{|y|}(y))$, for any homogeneous elements $x, y \in A(R)$.
By \cite[Corollary 4.2.5]{HNUVVW1}, the twisted solution $R^{\sigma}_1$ is given by
\begin{align}
\label{eqtn:twist_R}
    \left( R^{\sigma} \right)^{ij}_{kl} = \sum_{0 \leq p,v \leq 1} \alpha^i_p R^{pj}_{kv}\beta^v_l \quad \text{ for all } 0 \leq i,j,k,l \leq 1.
\end{align}
Thus,    
\begin{align*}
    R^{\sigma}_1 = 
    \begin{pmatrix}
        1 & 0 & 0 & 1 \\
        0 & 1 & 1 & 0 \\
        0 & 1 & 1 & 0 \\
        1 & 0 & 0 & 1
    \end{pmatrix},
\end{align*}
which equals $R_1$. Similarly, \begin{align*}
    R_i^{\sigma} = 
        \begin{pmatrix}
            1 & 0 & 0 & i \\
            0 & 1 & -1 & 0 \\
            0 & 1 & 1 & 0 \\
            i & 0 & 0 & 1
        \end{pmatrix},
    \end{align*}
which equals $R_i$.

Consider now the case where the matrix $\alpha = (\alpha^i_j)$ has eigenvalues $\lambda, -\lambda \in \C$. That is,
\begin{align*}
    (\alpha^i_j) = 
    \begin{pmatrix}
        \lambda & 0 \\
        0 & -\lambda
    \end{pmatrix} \text{ and }
    (\beta^i_j) =
    \begin{pmatrix}
        \lambda^{-1} & 0 \\
        0 & -\lambda^{-1}
    \end{pmatrix}
\end{align*}

In this second case, we compute the 2-cocycle $\sigma$ similarly, using the formula given in (\ref{2cocycleformula}), to twist the solutions $R_1$ and $R_i$, respectively. By Equation (\ref{eqtn:twist_R}), we obtain

\begin{align*}
(R_1)^{\sigma} &= 
\begin{pmatrix}
1 & 0 & 0 & -1\\
0 & -1 & 1 & 0\\
0 & 1 & -1 & 0 \\
-1 & 0 & 0 & 1
\end{pmatrix},
&
(R_i)^{\sigma} &= 
\begin{pmatrix}
1 & 0 & 0 & -i \\
0 & -1 & -1 & 0 \\
0 & 1 & -1 & 0 \\
-i & 0 & 0 & 1
\end{pmatrix},
\end{align*}
and observe that both $(R_1)^{\sigma}$ and $(R_i)^{\sigma}$ are not equivalent to any of the Gaussian solutions given in Section \ref{sec:Gaussian}.
\end{proof}

\bibliographystyle{amsalpha}

\pagebreak

\appendix
\markboth{Appendix}{Appendix}
{\section{Y--B Obstructions for $R_1$ and $R_i$}\label{app:A1zijkl}}
\begin{appendices}
    \begin{table}[H]
        \centering
        \caption{The 16 Y--B Obstructions corresponding to $R_1$.}
        \label{tab:R1_zijkl}
        \renewcommand{\arraystretch}{1.25}
        \begin{tabular}{c c c c c l}
        \toprule
         & $(u,v)$ & & & &  \\
        $(i,j,k,l)$ & $(0,0)$ & $(0,1)$ & $(1,0)$ & $(1,1)$ & $z^{ij}_{kl} = \sum$\\
        \midrule
        $(0,0,0,0)$ & $0$ & $0$ & $0$ & $c^2-b^2$ & $z^{00}_{00} = c^2 - b^2$ \\
        $(0,1,0,0)$ & $-ac$ & $ac$ & $ca$ & $-bd$ & $z^{01}_{00} = ca - bd$ \\
        $(0,0,0,1)$ & $ab$ & $-ba$ & $-ab$ & $cd$ & $z^{00}_{01} = cd - ba$\\
        $(0,1,0,1)$ & $0$ & $ad-bc$ & $cb-ad$ & $0$ & $z^{01}_{01} = cb - bc$ \\
        $(0,0,1,0)$ & $ba$ & $-ba$ & $-ab$ & $dc$ & $z^{00}_{10} = dc - ab$\\
        $(0,1,1,0)$ & $0$ & $0$ & $da - ad$ & $0$ & $z^{01}_{10} = da - ad$\\
        $(0,0,1,1)$ & $b^2-a^2$ & $0$ & $0$ & $d^2-b^2$ & $z^{00}_{11} = d^2 - a^2$ \\
        $(0,1,1,1)$ & $-ac$ & $bd$ & $db$ & $-bd$ & $z^{01}_{11} = db - ac$ \\
        $(1,0,0,0)$ & $-ca$ & $ac$ & $ca$ & $-db$ & $z^{10}_{00} = ac - db$ \\
        $(1,0,0,1)$ & $0$ & $ad - da$ & $0$ & $0$ & $z^{10}_{01} = ad - da$ \\
        $(1,0,1,0)$ & $0$ & $bc - da$ & $da - cb$ & $0$ & $z^{10}_{10} = bc - cb$ \\
        $(1,0,1,1)$ & $-ca$ & $bd$ & $db$ & $-db$ & $z^{10}_{11} = bd - ca$ \\
        $(1,1,0,0)$ & $a^2 - c^2$ & $0$ & $0$ & $c^2 - d^2$ & $z^{11}_{00} = a^2 - d^2$ \\
        $(1,1,0,1)$ & $ab$ & $-dc$ & $-cd$ & $cd$ & $z^{11}_{01} = ab - dc$ \\
        $(1,1,1,0)$ & $ba$ & $-dc$ & $-cd$ & $dc$ & $z^{11}_{10} = ba - cd$ \\
        $(1,1,1,1)$ & $b^2 - c^2$ & $0$ & $0$ & $0$ & $z^{11}_{11} = b^2 - c^2$ \\
        \bottomrule
        \end{tabular}
        \label{tab:R1_obstructions}
    \end{table}
\vfill
\end{appendices}

\pagebreak

\begin{table}[H]

        \centering
        \caption{The 16 Y--B Obstructions corresponding to $R_i$.}
        \label{tab:Ri_zijkl}
        \renewcommand{\arraystretch}{1.25}
        \begin{tabular}{c c c c c l}
        \toprule
         & $(u,v)$ & & & &  \\
        $(i,j,k,l)$ & $(0,0)$ & $(0,1)$ & $(1,0)$ & $(1,1)$ & $z^{ij}_{kl} = \sum$\\
        \midrule
        $(0,0,0,0)$ & $0$ & $0$ & $0$ & $-i^2(b^2 - c^2)$ & $z^{00}_{00} = -i(b^2 - c^2)$ \\
        $(0,1,0,0)$ & $-ac$ & $ac$ & $ca$ & $-ibd$ & $z^{01}_{00} = ca-ibd$ \\
        $(0,0,0,1)$ & $ab$ & $-ba$ & $-ab$ & $icd$ & $z^{00}_{01} = icd - ba$ \\
        $(0,1,0,1)$ & $0$ & $ad - bc$ & $-ad + cb$ & $0$ & $z^{01}_{01} = cb - bc$ \\
        $(0,0,1,0)$ & $ba$ & $ba$ & $-ab$ & $idc$ & $z^{00}_{10} = 2ba - ab + idc$ \\
        $(0,1,1,0)$ & $0$ & $2bc$ & $-ad + da$ & $0$ & $z^{01}_{10} = 2bc + da - ad$ \\
        $(0,0,1,1)$ & $-ia^2 + b^2$ & $0$ & $0$ & $-b^2 + id^2$ & $z^{00}_{11} = -i^2(a^2 - d^2)$ \\
        $(0,1,1,1)$ & $-iac$ & $bd$ & $db$ & $-bd$ & $z^{01}_{11} = db - iac$ \\
        $(1,0,0,0)$ & $-ca$ & $ac$ & $-ca$ & $-idb$ & $z^{10}_{00} = ac - 2ca - idb$ \\
        $(1,0,0,1)$ & $0$ & $ad - da$ & $-2cb$ & $0$ & $z^{10}_{01} = ad - 2cb - da$ \\
        $(1,0,1,0)$ & $0$ & $bc + da$ & $-cb - da$ & $0$ & $z^{10}_{10} = bc - cb$ \\
        $(1,0,1,1)$ & $-ica$ & $bd$ & $-db$ & $-db$ & $z^{10}_{11} = bd - 2db - ica$ \\
        $(1,1,0,0)$ & $-c^2 + ia^2$ & $0$ & $0$ & $c^2 - id^2$ & $z^{11}_{00} = i(a^2 - d^2)$ \\
        $(1,1,0,1)$ & $iab$ & $-dc$ & $-cd$ & $cd$ & $z^{11}_{01} = iab - dc$ \\
        $(1,1,1,0)$ & $iba$ & $dc$ & $-cd$ & $dc$ & $z^{11}_{10} = 2dc + iba - cd$ \\
        $(1,1,1,1)$ & $-ic^2 + ib^2$ & $0$ & $0$ & $0$ & $z^{11}_{11} = i(b^2 - c^2)$ \\
        \bottomrule
        \end{tabular}
    \end{table}
\vfill
\end{document}